\newtheorem{theorem}{Theorem}[section]
\theoremstyle{definition}
\theoremstyle{remark}
\def\RR{{\mathbb{R}}}
\def\11{\textbf{$1$}}
\def\ir{\displaystyle{\int_{\RR^N}}}
\def\:{\colon}
\def\R{\mathbb{R}}
\def \ir3 {\int_{\R^3}}
\def\ir{\int_{\R}}
\title{Nonexistence result for a semilinear elliptic problem}
\author{Salvador López-Martínez}
\address{Departamento de Análisis Matemático, Universidad de Granada, Facultad de Ciencias, AvenidaFuentenueva s/n, 18071, Granada, Spain}
\email{salvadorlopez@ugr.es}
\author{Alexis Molino}
\address{Departamento de Análisis Matemático, Universidad de Granada, Facultad de Ciencias, AvenidaFuentenueva s/n, 18071, Granada, Spain}
\email{amolino@ugr.es}
\begin{document}

\begin{abstract} In this paper we prove the nonexistence of nontrivial solution to 
\begin{equation*}
\begin{cases}
-\Delta u =f(u) &\text{in }\Omega,
\\
u=0 &\text{on } \partial \Omega,
\end{cases}
\end{equation*}
being $\Omega \subset \mathbb{R}^N$ ($N\in\mathbb{N}$) a bounded domain and $f$ locally Lispchitz with non-positive primitive.
\end{abstract}
\maketitle
\section{Introduction}
Problems of partial differential equations are extensively studied at present, mainly motivated  by their applications in fields of physics, biology and engineering among others. One of the simplest models of nonlinear elliptic differential equations is the following
\begin{equation}\label{eq10}
\begin{cases}
-\Delta u =f(u) &\text{in }\Omega,
\\
u=0 &\text{on } \partial \Omega,
\end{cases}
\tag{$P$}
\end{equation}
being $\Omega \subset \mathbb{R}^N$ ($N\in\mathbb{N}$) a bounded domain  with boundary of class $\mathcal{C}^{1,1}$ and the source term $f:\mathbb{R}\to\mathbb{R}$ a locally Lipschitz function.

%Along this note, a \emph{solution} to \eqref{eq10} will be a function $u\in H_0^1(\Omega)\cap L^\infty(\Omega)$ satisfying
%\begin{equation}\label{weak}
%\int_\Omega\nabla u\nabla\phi=\int_\Omega f(u)\phi\quad\forall\phi\in H_0^1(\Omega).
%\end{equation}
%This is the usual concept of weak solution. Observe that, by regularity results, every solution to \eqref{eq10} is a classical solution. Namely, $u\in\mathcal{C}^2(\Omega)\cap\mathcal{C}^{1,\alpha}(\overline{\Omega})$, for some $\alpha\in (0,1)$, satisfying \eqref{eq10} pointwise (see e.g. \cite{MR2431434})

  Along this note, a \emph{classical solution} to \eqref{eq10} (\emph{solution} from now on) will be a function $u\in\mathcal{C}^2(\Omega)\cap\mathcal{C}^{1,\alpha}(\overline{\Omega})$, for some $\alpha\in (0,1)$, satisfying \eqref{eq10} pointwise. Observe that, by regularity results, every bounded weak solution is a solution to this problem (see e.g. \cite{MR2431434}).

%
% Observe that, if $u\in H_0^1(\Omega)\cap L^\infty(\Omega)$ satisfies \eqref{weak} and $\partial\Omega$ satisfies the exterior cone condition, then $u\in \mathcal{C}(\overline{\Omega})$, so the continuity of the solutions represents almost no restriction. Actually, if $\partial\Omega$ is smooth enough, then regularity results imply that every  solution $u\in H_0^1(\Omega)\cap \mathcal{C}(\overline{\Omega})$ to \eqref{eq10} satisfies that $u\in \mathcal{C}^2(\Omega)$, so it is a classical solution (see e.g. \cite{MR2431434}). 
  
When studying any kind of problem involving differential equations, it is always useful to know necessary conditions for the existence of solution. For instance, it follows immediately that a necessary condition for the existence of a solution $u$ to \eqref{eq10} is that $u$ must satisfy the equality
\begin{equation}\label{1000}
\int_\Omega|\nabla u|^2=\int_\Omega f(u)u.
\end{equation}
In consequence, a straightforward nonexistence result for problem \eqref{eq10} states that if 
\begin{equation}\label{NonEx1}
\text{\emph{ $f(s)s\leq 0$, \quad for all $s\in\mathbb{R}$,}}
%\tag{$R_1$}
\end{equation}
there exists no nontrivial solution to \eqref{eq10}. In addition, the well-known Pohozaev identity (\cite{MR0192184}) yields a sort of generalization of this simple result. To be more precise, every solution $u$ to \eqref{eq10} must satisfy the following equality:
 \begin{equation}\label{Poho}
\frac{1}{2}\int_{\partial \Omega}|\nabla u(x)|^2\,x\cdot \nu(x)dx+\frac{N-2}{2}\int_{\Omega}|\nabla u(x)|^2dx=N\int_\Omega F(u(x))dx,
\end{equation}
where $F(s)=\int_0^s f(t)dt$ for all $s\in\mathbb{R}$ and $\nu$ denotes the unit outward normal to $\partial \Omega$ vector. Observe that if $\Omega$ is starshaped with respect to $0$ (i.e., $x\cdot \nu(x)>0$ on $\partial \Omega$) and $N\geq 3$, the left hand side of \eqref{Poho} is non-negative. Therefore, if 
\begin{equation}\label{NonEx2}
\begin{array}{cc}
\text{\emph{$F(s)\leq 0$, \quad for all $s\in\mathbb{R}$, }}
\end{array}
%\tag{$R_2$}
\end{equation}
 there exists no nontrivial solution to \eqref{eq10} whenever $\Omega$ is starshaped. Keep in mind that condition \eqref{NonEx2} implies that $f(0)=0$. Thus, zero is always a solution.
 
   Condition $sf(s)\leq 0$ clearly guarantees $F(s)\leq 0$, but not conversely. A simple example is $f(s)=\lambda \sin s$, being $\lambda <0$. Where, to our knowledge, the existence of a nontrivial solution until now is unknown. Instead, existence of solutions for $\lambda>0$ were established in \cite{Figue}. In this way, a natural question is whether the condition $\Omega$ is starshaped is essential for the nonexistence of nontrivial solution to \eqref{eq10}, for any bounded domain $\Omega$ and $f$ satisfying \eqref{NonEx2}.
   
      A similar situation arises when one analyzes the well-known supercritical case result, also derived from \eqref{Poho}. Concretely, if $f(s)=\lambda|s|^{p-2}s$, for $\lambda>0$ and $p\geq 2^*$, there exists no nontrivial solution to \eqref{eq10} provided $N\geq 3$ and $\Omega$ is starshaped. However,  it is surprising the existence of nontrivial solutions for $p\geq 2^*$ when the domain is not starshaped. For instance, positive solutions have been found when the domain is an annulus (see the seminal paper \cite{MR0477445} and references therein) or for domains with small holes (\cite{DELPINO2002511}).

But nevertheless, much less is known about the influence of the geometry of $\Omega$ in the existence of solution to problem \eqref{eq10} in the case $F(s)\leq 0$ and the literature contains only partial nonexistence results. Observe that for functions $f$  globally Lipschitz, with  $L-$Lipschitz constant, it follows that  $|f(s)|\leq L |s|$. Thus, applying Poincaré inequality in \eqref{1000}, we obtain
\begin{equation*}
\lambda_1\int_\Omega u^2\leq  \int_\Omega|\nabla u|^2=\int_\Omega f(u)u\leq L \int_\Omega u^2.
\end{equation*}
Therefore, this simple computation gives the nonexistence of nontrivial solutions as long as $L<\lambda_1$, being $\lambda_1$ the first eingenvalue for the Laplacian operator with zero Dirichlet boundary conditions. In this line, in \cite{RICCERI20082964} and \cite{MR2536819} the authors prove the nonexistence provided that $L\leq 3\lambda_1$ ($N\geq 2$). Recently, in \cite{Goubet}, the nonexistence of nontrivial solutions is shown if either $\partial \Omega$ has non-negative mean curvature or $\Omega$ is an annulus, also for functions $f$  globally Lipschitz and $N\geq 2$. On the other hand, in \cite{MR937538} (see also \cite{Dancer}), a condition similar to $F(s)\leq 0$ is imposed, and the authors prove the nonexistence of positive solutions which satisfy a certain extra property; no geometric condition on $\Omega$ is assumed.

In the present paper, inspired by the results in \cite{MR937538}, we prove that there is no nontrivial solution to problem \eqref{eq10} provided $F(s)\leq 0$, being $f$ a  locally Lispchitz function. Here, no additional hypotheses on $\Omega$, $N$ nor $f$ are imposed. This exposes the unexpected fact that there is no geometric assumption on $\Omega$ that gives a nontrivial solution.

%   the geometry of $\Omega$, such as being starshaped, is irrelevant in the non-positive primitive case, in contrast to the supercritical case.

%we realize the surprising fact that there is no solution even when the domain in not starshaped.
%
%
%
%we realize the surprising fact that there is no solution for any bounded domain, 
%
%we realize the surprising fact that there is no solution for any bounded domain, no matter its geometry.

\section{main result}  

\begin{theorem}If $F(s)\leq 0$ for all $s\in\mathbb{R}$, there exists no nontrivial solution to \eqref{eq10}.
\end{theorem}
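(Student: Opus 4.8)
The plan is to argue by contradiction, extracting from the sign condition $F\le 0$ enough rigidity to force $u\equiv 0$, and to organize the work by first reducing to one–signed solutions. Note that the change $u\mapsto -u$, $f(\cdot)\mapsto -f(-\cdot)$ sends \eqref{eq10} to a problem of the same form whose primitive is $s\mapsto F(-s)\le 0$, so the hypothesis is preserved and we may assume $\max_{\overline\Omega}u>0$. Restricting $u$ to a connected component $\Omega'$ of the nodal domain $\{u>0\}$, the function $u$ is a \emph{positive} solution of \eqref{eq10} on $\Omega'$ (with $u=0$ on $\partial\Omega'$, since $u\in\mathcal C^{1,\alpha}(\overline\Omega)$). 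Thus it suffices to rule out strictly positive solutions on an \emph{arbitrary} bounded domain; this is precisely the setting of \cite{MR937538}, and the point of the proof is to remove the extra hypothesis used there.

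The guiding object is the energy density $P:=|\nabla u|^2+2F(u)$. On $\partial\Omega'$ one has $u=0$, hence $F(u)=F(0)=0$, so $P=(\partial_\nu u)^2\ge 0$ there (strictly positive where the boundary is regular, by Hopf). At an interior maximum point $x_0$ of $u$ one has $\nabla u(x_0)=0$, hence $P(x_0)=2F(u(x_0))\le 0$. This boundary–versus–interior sign dichotomy already settles the model cases: for $N=1$ the density $\tfrac12 P=\tfrac12(u')^2+F(u)$ is constant, while for radial solutions $\ddr\!\left(\tfrac12 P\right)=-\tfrac{N-1}{r}(u')^2\le 0$, and in either case comparing the nonnegative boundary value with the nonpositive interior-critical value forces $\partial_\nu u=0$ on $\partial\Omega'$, whence $u\equiv 0$ by uniqueness for the Cauchy problem associated to the locally Lipschitz nonlinearity. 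The strategy is to promote this dichotomy to arbitrary domains.

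To connect the sign of $P$ with global information I would invoke the Rellich--Pohozaev machinery: for a vector field $h$ one has, using $F(u)=0$ on $\partial\Omega'$,
\[-\int_{\Omega'}(\div h)\,F(u)=\int_{\Omega'}(Dh\,\nabla u)\cdot\nabla u-\tfrac12\int_{\Omega'}(\div h)\,|\nabla u|^2-\tfrac12\int_{\partial\Omega'}(h\cdot\nu)\,|\nabla u|^2,\]
which specializes to \eqref{Poho} when $h(x)=x$ and can be combined with \eqref{1000}. \textbf{The main obstacle is the boundary integral.} The choice $h=x$ requires $\Omega'$ star-shaped; the choice $h=\nabla\phi$ with a torsion-type potential ($\Delta\phi=\pm 1$, $\phi=0$ on $\partial\Omega'$) controls the boundary term through Hopf's lemma but leaves the indefinite term $\int_{\Omega'}(D^2\phi\,\nabla u)\cdot\nabla u$, which is absorbed by $\tfrac12\int_{\Omega'}|\nabla u|^2$ only when the largest eigenvalue of $D^2\phi$ stays below half its trace — a balanced–Hessian condition that fails for domains with flat boundary pieces (e.g. a cube). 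Since \emph{no} geometric hypothesis on $\Omega$ is available, I expect the vector-field identity alone to be insufficient, and the decisive step to be a maximum-principle/comparison argument for $P$ (or, alternatively, a continuation argument in the nonlinearity $f$) anchored on the two robust facts $P=(\partial_\nu u)^2\ge 0$ on $\partial\Omega'$ and $P=2F\le 0$ at interior maxima, engineered to force $\partial_\nu u\equiv 0$ on $\partial\Omega'$ and hence $u\equiv 0$ by unique continuation. Making this last step work without any curvature or star-shapedness assumption is where the real difficulty lies.
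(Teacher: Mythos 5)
Your proposal is a research plan rather than a proof: it sets up two correct observations (the sign dichotomy of $P=|\nabla u|^2+2F(u)$ between $\partial\Omega'$ and interior maxima, and the failure of Rellich--Pohozaev vector-field identities without geometric control of the boundary term), but it explicitly stops short of the decisive step, which you acknowledge you do not know how to carry out. Worse, the anchor you propose does not actually carry the weight you place on it: the classical $P$-function maximum principle (for $f\in\mathcal{C}^1$) says only that $\max_{\overline{\Omega'}}P$ is attained on $\partial\Omega'$ or at a critical point of $u$, and the configuration ``$P\geq 0$ on the boundary, $P\leq 0$ at interior maxima'' is perfectly consistent with that statement --- nothing forces $\partial_\nu u\equiv 0$. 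So there is no contradiction to be extracted from these two facts alone, and the gap is not a technicality but the absence of the main idea. (The nodal-domain reduction also carries an unaddressed regularity issue: a component $\Omega'$ of $\{u>0\}$ need not satisfy an interior sphere condition, so the Hopf and Pohozaev statements you invoke on $\partial\Omega'$ are not available; the paper's argument avoids this entirely by never reducing to positive solutions.)

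What the paper actually does is quite different and entirely one-dimensional in spirit. After normalizing $u_\infty:=\max_{\overline\Omega}u>0$ and extending $f$ to be globally $L$-Lipschitz with $f(s)\to-\infty$ as $s\to+\infty$, one shows $f(u_\infty)>0$ by comparing $u$ with the constant $u_\infty$ via the strong maximum principle (using that $s\mapsto f(s)+Ls$ is non-decreasing). This yields an interval $(s_1,s_2)\ni u_\infty$ with $f>0$ on $(s_1,s_2)$, $f(s_1)=f(s_2)=0$, and, after a harmless modification of $f$, $F(s_2)<0$. One then solves the ODE $-w''=f(w)$, $w(0)=s_2$, $w'(0)=-\sqrt{-F(s_2)}$; the energy identity $(w'(r))^2=F(s_2)-2F(w(r))$ and the strict inequality $F(s_2)<0$ force $0<\inf w<s_1$. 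Setting $W(r)=s_2$ for $r\leq 0$ and $W(r)=\min\{w(r),s_2\}$ for $r>0$, the planar translates $v_t(x)=W(x_1-t)$ form a family of supersolutions, and Serrin's sweeping principle (the set $U=\{t:u\leq v_t\text{ on }\overline\Omega\}$ is nonempty, closed, and open, openness coming again from the strong maximum principle with the shift $L$) gives $u\leq v_t$ for \emph{all} $t$, hence $u\leq\inf w<s_1$ everywhere --- contradicting $u_\infty\in(s_1,s_2)$. The key insight you were missing is precisely this translation-invariant comparison with a one-dimensional profile, which sidesteps all geometry of $\Omega$; no refinement of the $P$-function or of the vector-field identity along the lines you sketch is known to close the argument.
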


\begin{proof} Clearly, zero is a solution. We argue by contradiction and assume that there exists a nontrivial solution $u$ to \eqref{eq10}. First of all, notice that $-u$  is a solution to
\begin{equation*}
\begin{cases}
-\Delta u =-f(-u) &\text{in }\Omega,
\\
u=0 &\text{on }\partial \Omega.
\end{cases}
\end{equation*}
Since the function $-f(-s)$ is under the hypotheses of the theorem, there is no loss of generality in assuming that $u_\infty :=\max_{x\in \overline\Omega}u(x)>0$. On the other hand, since  $f$ is locally Lipschitz and the value of $f(s)$ for $s >u_\infty$ is irrelevant, we can also assume that $f$ is globally Lipschitz, with Lipschitz constant $L>0$, and that $\lim_{s\to +\infty} f(s)=-\infty$.

 It is easy to check that $f(u_\infty)>0$. Indeed, arguing by contradiction, assume that $f(u_\infty)\leq 0$. Then, 
\begin{equation}\label{eq3}
-\Delta u_\infty + L u_\infty\geq f(u_\infty)+Lu_\infty\quad\text{ in }\Omega.
\end{equation}
Moreover, we have proved that
\begin{equation}\label{eq2}
-\Delta u + Lu=f(u)+Lu\quad\text{ in }\Omega.
\end{equation}
Subtracting \eqref{eq2} from \eqref{eq3}, and using that $f(s)+Ls$ is non-decreasing, we obtain
\[-\Delta (u_\infty-u) + L (u_\infty-u)\geq f(u_\infty)+Lu_\infty-f(u)-Lu\geq 0\quad\text{ in }\Omega.\]
Since $u_\infty>u$ on $\partial\Omega$, the strong maximum principle implies that $u_\infty>u$ in $\Omega$, which is a contradiction.

Thus, the fact that $f(u_\infty)>0$ implies that there are $s_1,s_2>0$ such that $s_1<u_\infty<s_2$ and 
\begin{equation}\label{ineqf}
f(s)>0\quad\forall s\in (s_1,s_2).
\end{equation}
Moreover, since $F(s)\leq 0$ and $\lim_{s\to +\infty}f(s)=-\infty$, we can choose respectively $s_1$ and $s_2$ such that $f(s_1)=f(s_2)=0$.  Further, we can assume that $F(s_2)<0$ since, otherwise (i.e., if $F(s_2)=0$), we can modify $f$ to another $L$-Lipschitz function $f^*$ such that $f(s)>f^*(s)>0$ for $s\in (u_\infty,s_2)$ and $f=f^*$ elsewhere. In this way, $u$ is still a solution to \eqref{eq10}, but now $F(s_2)<0$.

Now we will find a family of supersolutions to \eqref{eq10} which will lead to a contradiction by comparison with $u$. For this purpose, we follow the original reasoning in \cite{MR937538}, which in principle is performed for $f\in\mathcal{C}^1(\mathbb{R})$. Here we adapt the proof to our setting and check that it also works for Lipschitz functions $f$.
  
%Now, next reasoning it was first stated in \cite{MR937538} for functions $f\in\mathcal{C}^1(\mathbb{R})$. Here we adapt  the proof in this setting and we check that it also works for Lipschitzian functions.

  Indeed, consider the following initial value problem
\begin{equation*}
\begin{cases}
-w''(r)=f(w(r)),\quad\forall r>0,
\\
w(0)=s_2,
\\
w'(0)=-\sqrt{-F(s_2)}.
\end{cases}
\end{equation*}
Since $f$ is Lipschitz there is a unique solution $w\in \mathcal{C}^2([0,+\infty))$.  Multiplying the equation by $w'(r)$ and integrating, we obtain
\begin{align}\label{salvador}
\nonumber  (w'(r))^2 & = -F(s_2)+2\int_{w(r)}^{s_2}f(s)ds
\\
&=F(s_2)-2F(w(r)).
\end{align}
Thus, using \eqref{ineqf} we get that 
\begin{equation}\label{eq4}
(w'(r))^2>0\text{ for }w(r)\in [s_1,s_2].
\end{equation}
Now, since $w(0)=s_2$ and $w'(0)<0$, we deduce easily that $w(r)\in (s_1,s_2)$ for all $r>0$ small enough. We claim now that there exists $r_0>0$ such that $w(r_0)=s_1$. Indeed, assume by contradiction that $w(r)>s_1$ for all $r>0$. Then, by \eqref{eq4} we have that $w$ is decreasing in $(0,+\infty)$. Hence, there exists $s_3\in [s_1,s_2)$ such that $\lim_{r\to+\infty}w(s)=s_3$. But this is impossible as $w''(r)=-f(w(r))<0$ for all $r>0$, i.e., $w$ is concave.

In consequence, since $w(r_0)=s_1$ and $w'(r_0)<0$, we deduce that $\inf_{r\geq 0}w(r)<~s_1$. Moreover, it is easy to show that $\inf_{r\geq 0}w(r)>0$. Indeed, assuming otherwise, there exists a sequence $\{r_n\}\subset [0,+\infty)$ such that $\lim_{n\to\infty}w(r_n)=0$. Then, for $n$ large enough, we deduce from \eqref{salvador} that $(w'(r_n))^2<\frac{F(s_2)}{2}<0$, a contradiction. 

Thus, we have proved that
\begin{equation}\label{infimo}
0 < \inf w<s_1.
\end{equation}
Next, we define
\begin{equation*}
W(r)=
\left\{
\begin{array}{ll}
s_2, & r\in (-\infty,0\,],
\\
\min\{w(r),s_2\}, & r\in (0,\infty).
\end{array}
\right.
\end{equation*}
Since we can assume that $f(s)<0$ for $s>s_2$, it follows that $w$ is convex if $w(r)>s_2$. This implies that, if $w(r_2)=s_2$ for some $r_2>0$, then $W(r)=s_2$ for all $r\geq r_2$. Otherwise, $w(r)<s_2$ for all $r> 0$, so $W(r)=w(r)$ for all $r>0$.

  For every $t\in\mathbb{R}$, consider the family of parametric functions $v_t(x)=W(x_1-t)$ for all $x=(x_1,...,x_N)\in\mathbb{R}^N$. We will prove now that $u(x)\leq v_t(x)$ for all $x\in\overline\Omega$ and for all $t\in\mathbb{R}$ using the sweeping principle of Serrin. Indeed, let 
\[U=\{t\in\mathbb{R}:u(x)\leq v_t(x)\text{ for all }x\in\overline\Omega\}.\] 
Note that $v_t=s_2$ for $t$ large enough, and $u<s_2$ in $\overline\Omega$, so $U$ is nonempty. Notice also that $W$ is a globally Lipschitz function, so the function $t\mapsto v_t(x)$ is continuous uniformly in $x$. In particular, $U$ is closed. 

Let us now take $t\in U$. Observe that $v_t\in W^{1,\infty}(\Omega)$ and $-\Delta v_t\geq  f(v_t)$ in $\Omega$ (in the weak sense). Then, since $s\mapsto f(s)+Ls$ is non-decreasing and $u\leq v_t$ in $\overline\Omega$, we have that $-\Delta(v_t-u)+L(v_t-u)\geq 0$ in $\Omega$. Notice that 
\[u(x)=0<\inf w\leq v_t(x)\quad\forall x\in\partial\Omega,\]
so $v_t\not\equiv u$. Then, the strong maximum principle implies that $u(x)<v_t(x)$ for all $x\in\overline\Omega$. Therefore, the uniform continuity of $s\mapsto v_s$ implies that there exits $T>0$, independent of $x$, such that $u(x)<v_s(x)$ for all $x\in\overline\Omega$ and for all $s\in (t-T,t+T)$. That is to say, $(t-T,t+T)\subset U$, so $U$ is open. In conclusion, $U=\mathbb{R}$, and thus, $u\leq v_t$ for all $t\in\mathbb{R}$. In consequence,
\[u(x)\leq\inf_{t\in\mathbb{R}}v_t(x)=\inf_{r>0}w(r)<s_1,\quad\forall x\in\Omega,\]
which is a contradiction with the fact that $u_\infty\in (s_1,s_2)$.
\end{proof}

\section*{Acknowledgements}

First and second author are supported  by  MINECO-FEDER grant MTM2015-68210-P. First author is also supported by  Junta de Andaluc\'ia FQM-116 (Spain) and Programa de Contratos Predoctorales del Plan Propio de la Universidad de Granada.

\bibliographystyle{plainnat-linked-initials}
\bibliography{bibliography}

\end{document}